\providecommand{\eprint}[1]{}
\renewcommand{\eprint}[1]{arXiv:\href{http://arxiv.org/abs/#1}{#1}}
\titleformat{\chapter}[display]
{\normalfont%
    \normalfont
    \bfseries}{\chaptertitlename\ \thechapter}{20pt}{%
    \Large 
    }
\newcommand{\beqn}{\begin{eqnarray}}
\newcommand{\eeqn}{\end{eqnarray}}
\newcommand{\be}{\begin{equation}}
\newcommand{\ee}{\end{equation}}
\newcommand{\ve}{\varepsilon}
\newtheorem{theorem}{Theorem}
\newtheorem{lemma}{Lemma}
\DeclareFontFamily{OMX}{MnSymbolE}{}
\DeclareSymbolFont{MnLargeSymbols}{OMX}{MnSymbolE}{m}{n}
\DeclareFontShape{OMX}{MnSymbolE}{m}{n}{
    <-6>  MnSymbolE5
   <6-7>  MnSymbolE6
   <7-8>  MnSymbolE7
   <8-9>  MnSymbolE8
   <9-10> MnSymbolE9
  <10-12> MnSymbolE10
  <12->   MnSymbolE12
}{}
\DeclareFontShape{OMX}{MnSymbolE}{b}{n}{
    <-6>  MnSymbolE-Bold5
   <6-7>  MnSymbolE-Bold6
   <7-8>  MnSymbolE-Bold7
   <8-9>  MnSymbolE-Bold8
   <9-10> MnSymbolE-Bold9
  <10-12> MnSymbolE-Bold10
  <12->   MnSymbolE-Bold12
}{}
\let\llangle\@undefined
\let\rrangle\@undefined
\DeclareMathDelimiter{\llangle}{\mathopen}%
                     {MnLargeSymbols}{'164}{MnLargeSymbols}{'164}
\DeclareMathDelimiter{\rrangle}{\mathclose}%
                     {MnLargeSymbols}{'171}{MnLargeSymbols}{'171}
\title{How doth the random triangle}
\author{Theodore D. Drivas and Michael  Retakh}
\date{}
\begin{document}

{\large

\maketitle

\vspace{-4mm}

Charles L. Dodgson, also known as Lewis Carroll, in his book \textit{Pillow problems} from 1893, asked for the likelihood of a random triangle to be obtuse (having an angle exceeding $90^\circ$) \cite[Question No. 58]{C58}:
\begin{figure}[htb]\centering
    \includegraphics[width=.8\columnwidth]{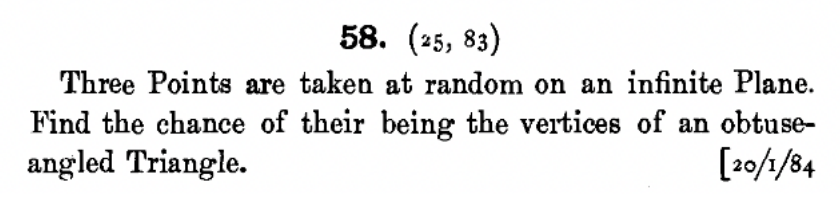} 
\end{figure}

True to form, Dodgson's question and his own proposed solution are whimsical (see \cite{G93,P94}).  For one, there are many ways to randomly draw a triangle but -- characteristically -- the way Dodgson defined it with a uniform  distribution on the plane is nonsense. This aside, he arrived at the intriguing conclusion that obtuse triangles are substantially more likely than acute.  

Our aim here is to study a generalization of Dodgson's question, and to examine more closely this apparent bias towards obtuse triangles. 
A random triangle on the plane (or - more generally - in $d$-dimensional Euclidean space) will be defined as three points drawn independently from a prescribed probability distribution.  We require that the probability distribution give zero mass to degenerate configurations of points on the plane. For example, the distribution could be uniform on some compact subset of the space (excluding points and straight line segments).   We ask:
\vspace{2mm}

\noindent  \textbf{Question:} \textit{Across all probability distributions on the plane, how unlikely can random obtuse triangles be? How likely?}
\vspace{2mm}

This question is reminiscent of Sylvester's Four-Point Problem \cite{S65,P89,BS11} which asks the probability of four "random" points to be in a convex position; the answer depends strongly on the distribution, but there are some limitations.

As for our question, it is easy to see that \emph{there exist probability distributions on the plane for which obtuse triangles appear with probability one}.  Indeed, take a distribution that uniformly weights a circular arc no longer than a semicircle.  Any three points on such an arc define an obtuse triangle, and so with full probability they occur.  Such a distribution is admittedly singular but among distributions that are absolutely continuous with respect to Lebesgue, the probability can be made as close to one as desired by approximation.

Less obvious is how unlikely obtuse triangles may be. One might be tempted to think that there should be some symmetry between acute and obtuse triangles, but this is not so (as Dodgson's original conclusion suggested).  In fact, it is easy to see that: 
\emph{Given any probability distribution in the plane $\mathbb{R}^2$, the probability of a random triangle being obtuse is  at least $\sfrac{1}{4}$}. 

To see this, let us replace the old scheme to generate a random triangle "select three points in the plane according to the given probability measure" with a new completely equivalent scheme (by independence): "select four points in the plane, according to the given probability measure, and then select one of those points uniformly at random to be discarded." The quadrilateral formed by the four points must have at least one obtuse angle, so one of the four possible triangles must be obtuse. Therefore, the probability of the triangle being obtuse after discarding one point at random is bounded below by $\sfrac{1}{4}$.

In view of this argument, it is natural to try to improve the 25\% bound upward by considering  $n$–tuples.
Using this idea, we prove:
\vspace{2mm}

   \begin{tcolorbox}
\begin{theorem}\label{thm1}
The probability of an obtuse triangle on $\mathbb{R}^2$ is at least $\sfrac{1}{3}$. 
\end{theorem}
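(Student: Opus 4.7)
The plan is to generalize the paper's 4-tuple argument to 5-tuples. Drawing 5 points i.i.d.\ from the distribution and using exchangeability gives $P(\text{obtuse}) = E[T_5/10]$, where $T_5$ is the number of obtuse triangles among the 5 sampled points. It then suffices to show $T_5 \geq 4$ for every configuration of 5 points in general position, since this would yield $P(\text{obtuse}) \geq 4/10 > 1/3$.

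The key refinement of the paper's 4-point bound is the following: if one of the 4 points, say $D$, lies inside the triangle $ABC$ formed by the other three, then $\angle ADB + \angle BDC + \angle CDA = 2\pi$, forcing at least two of these three angles to exceed $\pi/2$ (if two were at most $\pi/2$, the third would be at least $\pi$, impossible for a triangle angle). Such ``non-convex'' 4-tuples therefore contain at least 2 obtuse triangles. Summing the bound ``$\geq 1$ if convex, $\geq 2$ if non-convex'' over the five 4-subsets of the 5-tuple and dividing by 2 (each triangle belongs to exactly two 4-subsets) gives $T_5 \geq (5 + N)/2$, where $N$ is the number of non-convex 4-subsets.

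Case analysis on the convex-hull size $h$ of the 5-tuple handles most cases. If $h = 3$, each of the 2 interior points contributes 2 obtuse triangles with the 3 hull points in disjoint sets, yielding $T_5 \geq 4$. If $h = 4$, the single interior point lies inside exactly 2 of the 4 triangles spanned by the hull (one for each diagonal of the hull quadrilateral), so $N \geq 2$ and the formula gives $T_5 \geq 7/2$, i.e., $T_5 \geq 4$. The difficult case is $h = 5$, a convex pentagon, where $N = 0$ and the 4-subset sum only gives $T_5 \geq 3$.

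To handle convex pentagons, I would exploit the fact that the interior angles sum to $540^\circ$: at most three of them can be $\leq 90^\circ$ (else the remaining two would together need to exceed $360^\circ$, impossible since each is below $180^\circ$), so at least two pentagon angles exceed $90^\circ$. Each such obtuse vertex yields one obtuse ``type-1'' triangle formed by that vertex and its two neighbors. The main obstacle --- and the crux of the proof --- is to show that every convex pentagon contains at least two additional obtuse triangles among the ``type-2'' triangles (two consecutive vertices plus one non-adjacent vertex). A careful analysis combining the angular constraint $\sum_i \alpha_i = 540^\circ$ with convexity should produce these, raising $T_5$ to the required value of $4$.
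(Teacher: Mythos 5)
Your reduction $P(\text{obtuse})=\mathbb{E}[T_5]/10$ and your treatment of the hull-size cases $h=3$ and $h=4$ are correct, but the statement you yourself identify as the crux --- that every convex pentagon in general position spans at least $4$ obtuse triangles --- is \emph{false}, so the gap cannot be closed. Counterexample: take a strictly acute triangle $\mathsf{ABC}$ with angles $\alpha,\beta,\gamma$; place $a_{1,2}=\mathsf{A}\pm\tfrac{\delta}{2}u$ with $u\perp\mathsf{AC}$, place $c_{1,2}=\mathsf{C}\pm\tfrac{\varepsilon\delta}{2}w$ with $w\perp\mathsf{CB}$, and set $b=\mathsf{B}$, where $0<\varepsilon<\tfrac{1}{2}$ and $\delta$ is small. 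For a triangle with two nearby vertices $p_1,p_2$ and one far vertex $x$, acuteness is equivalent to the foot of the perpendicular from $x$ to the line $p_1p_2$ lying strictly inside the segment $p_1p_2$. Checking feet of perpendiculars: the four triangles $a_ic_jb$ are small perturbations of $\mathsf{ABC}$, hence acute; $a_1a_2c_j$ is acute for $j=1,2$ (the foot of $c_j$ lies within $O(\varepsilon\delta)$ of the midpoint $\mathsf{A}$ of $a_1a_2$); $c_1c_2b$ is acute (the foot of $b$ is exactly the midpoint $\mathsf{C}$, since $w\perp\mathsf{CB}$); but $a_1a_2b$ is obtuse (the foot of $b$ sits at distance $|\mathsf{AB}|\sin\alpha\gg\delta$ from $\mathsf{A}$), and $c_1c_2a_j$ is obtuse for $j=1,2$ (foot at distance $|\mathsf{CA}|\sin\gamma\gg\varepsilon\delta$ from $\mathsf{C}$). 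So exactly $3$ of the $10$ triangles are obtuse. Moreover, because $\alpha,\gamma<90^\circ$, each of the five points is extreme, so this is a convex pentagon (and in general position after an arbitrarily small perturbation); it has exactly two obtuse interior angles, hence two obtuse ``type-1'' triangles, but only \emph{one} obtuse ``type-2'' triangle, refuting the bound you hoped to prove. This example is not pathological: it is essentially the paper's Figure 1 distribution sampled in the pattern $\mathsf{AACCB}$, and it matches the paper's Lemma 1, which asserts $t_5=\tfrac{1}{3}\left(\binom{5}{3}-1\right)=3$.

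The failure is structural, not just a missing lemma. Since $t_5=3$, the best any $5$-point argument can give is $3/10<1/3$; more generally, the paper's Lemma 1 asserts $t_n=\tfrac{1}{3}\left(\binom{n}{3}-\lfloor n/3\rfloor\right)$, which is strictly smaller than $\tfrac{1}{3}\binom{n}{3}$ for every finite $n$, so no argument that fixes a single sample size and invokes the worst-case count can reach $1/3$. That is why the paper proceeds differently: it iterates exactly your double-counting step into the recursion $t_{n+1}\geq\left\lceil t_n\cdot\tfrac{n+1}{n-2}\right\rceil$, solves it in closed form, and obtains $1/3$ only as the limit of $t_n/\binom{n}{3}$ as $n\to\infty$. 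To repair your proof you would have to abandon the fixed-$n$ strategy and pass to this limit, which is precisely the paper's argument.
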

\end{tcolorbox}

We will later comment on the issue of sharpness of this lower-bound, as well as interesting behavior for triangles embedded in higher dimensional space.

\begin{proof}[Proof of Theorem \ref{thm1}]
We  build off the  $25\%$ argument above. Taking 5 points, we have 5 groups of $4$ points, guaranteeing at least $5$ obtuse triangles. However, we count each triangle twice, so we are guaranteed only  $\lceil \frac{5}{2} \rceil = 3$ obtuse triangles, giving a lower bound of $\frac{3}{\binom{5}{2}} = 0.3$.

In order to exploit $n$–tuples, we require the following

\begin{lemma}\label{2dlem}
Let $t_n$ be the minimum number of obtuse triangles in a configuration of $n$ planar points. Then, $t_n = \frac{\binom{n}{3} - \lfloor \frac{n}{3} \rfloor}{3}$.
\end{lemma}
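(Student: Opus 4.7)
The strategy is to prove two matching bounds on $t_n$: the lower bound by induction on $n$ and the upper bound by explicit construction.

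For the lower bound $t_n \geq \tfrac{1}{3}(\binom{n}{3} - \lfloor n/3 \rfloor)$, I would induct on $n$. The base case $n = 4$ is exactly the $25\%$ argument already given: any four points in general position determine at least one obtuse triangle, so $t_4 \geq 1$. For the inductive step at $n \geq 5$, consider the $n$ sub-configurations of size $n-1$ obtained by deleting each of the $n$ points in turn. Each has at least $t_{n-1}$ obtuse triangles by hypothesis, and each triangle of the full configuration appears in exactly $n-3$ such sub-configurations (one for each point outside its vertex set). Hence $t_n \geq \lceil n\, t_{n-1}/(n-3)\rceil$; using $\binom{n}{3} = \tfrac{n}{n-3}\binom{n-1}{3}$, this rearranges to
\[
t_n \;\geq\; \left\lceil \frac{\binom{n}{3}}{3} - \frac{n\lfloor(n-1)/3\rfloor}{3(n-3)}\right\rceil.
\]
A short case analysis on $n \bmod 3$ shows this ceiling equals $\tfrac{1}{3}(\binom{n}{3}-\lfloor n/3\rfloor)$: when $n = 3k$ the bound is already exact, and when $n = 3k \pm 1$ the fractional slack lies in $(0,1)$ (for $k \geq 2$ when $n = 3k+1$; the residual $n = 4$ is covered by the base case).

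For the upper bound, I would exhibit configurations attaining $t_n$. At $n = 6$, a small perturbation of the regular hexagon works: its six ``consecutive-vertex'' triples are $30^\circ$-$120^\circ$-$30^\circ$ (obtuse), while the twelve right $(1,2,3)$-gap triples and two equilateral $(2,2,2)$-gap triples are non-obtuse. A generic perturbation of the vertices converts the twelve right triangles into acute ones without destroying the obtuseness of the consecutive triples, yielding exactly $t_6 = 6$. For arbitrary $n$, I would attempt a recursive construction: starting from a minimal $(n-1)$-configuration, adjoin a new point so that its newly incident triangles contribute exactly $t_n - t_{n-1}$ obtuse ones, and so that every $(n-1)$-subset of the enlarged configuration continues to realize $t_{n-1}$ (preserving tightness of the averaging inequality).

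The main obstacle is the construction. The lower bound reduces cleanly to the $25\%$ base case plus a ceiling identity. By contrast, several natural candidates --- three clusters of three near the vertices of an equilateral triangle, the regular $n$-gon, or points on a convex curve --- all produce roughly $\tfrac{2}{3}\binom{n}{3}$ obtuse triangles rather than the desired $\tfrac{1}{3}\binom{n}{3}$, because every ``$2+1$'' triple straddling two tight clusters is forced to be obtuse: for near-coincident points $P_1, P_2$ and a distant point $Q$, the angles at $P_1$ and $P_2$ are nearly supplementary, so exactly one of them exceeds $90^\circ$. The substantive work is in producing a configuration that distributes the slack $\lfloor n/3 \rfloor$ correctly and keeps the inductive averaging tight at every scale.
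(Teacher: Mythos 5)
Your lower-bound half is essentially identical to the paper's proof: average over the $n$ subsets of size $n-1$, observe each triangle is counted $n-3$ times, take the ceiling, and close the induction from $t_4=1$ with casework on $n \bmod 3$. Your algebra checks out, and your version is in one respect more careful than the paper's, since you state the recurrence as the inequality $t_n \geq \lceil n\,t_{n-1}/(n-3)\rceil$, which is all the counting argument can ever give, and you flag the residual $n=4$ case. The paper writes the recurrence as an equality, but nothing in its argument justifies the reverse direction, and no extremal configurations are exhibited anywhere in it; what the paper actually proves is only $t_n \geq \frac{1}{3}\bigl(\binom{n}{3}-\lfloor n/3\rfloor\bigr)$, which is also all that Theorem 1 requires. (One caveat you share with the paper: the base case as stated is false for a rectangle, whose four triangles are right and none obtuse, so ``obtuse'' must throughout be read as ``non-acute,'' or right angles must be excluded by a genericity assumption.)

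The genuine gap in your proposal is the upper bound, and it is a real gap, not a finishing detail. Your hexagon step fails as written: a generic perturbation does not turn the twelve right triangles into acute ones. Indeed, if the six points stay concyclic, the inscribed angles on the two sides of any chord sum to $\pi$, so making the right angles on one side of a long diagonal acute forces those on the other side obtuse; a short count shows six concyclic points always have at least $12$ non-acute triangles, so no perturbation along the circle can reach $6$, and off the circle your argument gives no control over which way each right angle breaks. Your recursive step for general $n$ (``adjoin a point so that every $(n-1)$-subset stays extremal'') is a description of what is needed rather than a construction. There is also a structural reason to expect this half to be hard: if one had, for every $n$, a right-angle-free configuration attaining the formula, then thickening each point into a tiny disk or arc would yield planar distributions whose obtuse probability tends to $\frac{1}{3}$, essentially settling the paper's own Open Problem at $p_*=\frac{1}{3}$, whereas the best construction the authors know gives only $\frac{4}{9}$. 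Your closing diagnosis of why cluster-type constructions fail --- the ``$2+1$'' triples straddling two clusters are forced obtuse --- is correct, and it is precisely the obstruction that caps the known constructions at $\frac{4}{9}$; it should be read as a warning that the equality in this lemma is proved neither by your sketch nor, in fact, by the paper itself.
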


\begin{proof}[Proof of Lemma \ref{2dlem}]
We first argue for the following recurrence relation:
\be\label{recursion}
t_{n+1} = \lceil t_n \cdot \tfrac{n+1}{n-2} \rceil.
\ee
Taking $n+1$ points gives us $n+1$ groups of $n$ points, or a minimum of $t_n(n+1)$ obtuse triangles. However, multiplying gives us a total of $\binom{n}{3} \cdot (n+1)$ triangles instead of $\binom{n+1}{3}$ triangles. To fix this overcounting, we multiply to get $$t_{n+1} = \left\lceil t_n(n+1) \cdot \tfrac{\binom{n+1}{3}}{\binom{n}{3} \cdot (n+1)} \right\rceil = \left\lceil t_n(n+1) \cdot \tfrac{1}{n-2} \right\rceil = \left\lceil t_n \cdot \tfrac{n+1}{n-2} \right\rceil.$$

Having now established the recurrence, using the initial value of $t_4 = 1$ (by the quadrilateral argument), we now have a recursive definition for the minimum number of obtuse triangles. We aim to establish the claimed formula.

We proceed with induction. We can verify our base case of $t_4 = 1$, as $\frac{\binom{4}{3} - \lfloor \frac{4}{3} \rfloor}{3} = \frac{4-1}{3} = 1$. 
For our inductive step, we must prove that $$\tfrac{\binom{n+1}{3} - \lfloor \frac{n+1}{3} \rfloor}{3} = \left\lceil \tfrac{\binom{n}{3} - \lfloor \frac{n}{3} \rfloor}{3} \cdot \tfrac{n+1}{n-2} \right\rceil = \left\lceil \tfrac{\binom{n+1}{3} - \lfloor \frac{n}{3} \rfloor \cdot \frac{n+1}{n-2}}{3} \right\rceil.$$

First, we note that $\binom{n}{3} \equiv \left\lfloor \frac{n}{3} \right\rfloor \pmod 3$ for all $n$, as can be seen through casework based on $n \mod 9$. Therefore, the left side of our equation above is indeed an integer.

We are thus able to remove the ceiling to create the inequality $\left\lfloor \frac{n+1}{3} \right\rfloor \le \left\lfloor \frac{n}{3} \right\rfloor \cdot \frac{n+1}{n-2} < \left\lfloor \frac{n+1}{3} \right\rfloor + 3.$
A little casework is required, so we let $n = 3k, 3k+1,$ or $3k+2$ for any integer $k$, and we proceed with simplifying the inequality.

If $n=3k$, our inequality becomes $k \le k \cdot \frac{3k+1}{3k-2} < k+3.$ The left half is evidently true, and the right half can be simplified into $k(3k+1) < (k+3)(3k+2)$, which is true for all $k > 1$. As the smallest possible value of $n$ is $4$, and because all steps are reversible, our inductive step holds if $n \equiv 0 \pmod 3$.

Similarly, if $n = 3k+1$, our inequality becomes $k \le  \frac{k(3k+2)}{3k-1} < k+3.$ The left half is once again evidently true, and the right half holds for all $k > \frac{1}{2}$, or all $n \ge 3$. Therefore, our inductive step holds if $n \equiv 1 \pmod 3$.

Finally, if $n = 3k+2$, our inequality becomes $k+1 \le  \frac{k(3k+3)}{3k} < k+4.$ Since the middle term simplifies to $k+1$, the inequality clearly holds for all $k$. Therefore, our inductive step holds if $n \equiv 2 \pmod 3$.

Having examined all  cases, we can conclude that our inductive step holds true for all $n$. By induction, our Lemma is proved.
\end{proof}

We are now in the position to prove Theorem 1. Since the total number of triangles that can be created using $n$ points is $\binom{n}{3}$ and at least $\frac{\binom{n}{3} - \lfloor \frac{n}{3} \rfloor}{3}$ must be obtuse, a minimum bound for the probability of a randomly chosen triangle being obtuse, given any distribution, is $\lim_{n\to\infty} \sfrac{\frac{\binom{n}{3} - \lfloor \frac{n}{3} \rfloor}{3}}{\binom{n}{3}} = \frac{1}{3}.$
\end{proof}
\vspace{2mm}

In view of this lower bound, it is natural to ask about its sharpness.  Namely, do there exist probability distributions on the plane for which obtuse triangles occur with likelihood $\sfrac{1}{3}$?    The best we know is: \emph{there exist probability distributions such that the probability of an obtuse triangle is arbitrarily close to $\sfrac{4}{9}$.} The idea is this: let $\mathsf{A}$, $\mathsf{B}$, $\mathsf{C}$ be the vertices of an acute triangle where the angle at $\mathsf{A}$ is very close to right. Let $\delta, \varepsilon$ be small numbers. The measure consists of:
\begin{enumerate}
\item Mass $\frac{1}{3}$ uniformly distributed on a circular arc of length $\delta$ centered at $\mathsf{A}$,
\item Mass $\frac{1}{3}$ uniformly distributed on a circular arc of length $\ve\delta$ centered at $\mathsf{C}$,
\item Mass $\frac{1}{3}$ uniformly distributed on a circular arc of length $\ve^2\delta$ centered at $\mathsf{B}$.
\end{enumerate}
See Figure \ref{figtri}.
\begin{figure}[htb]\centering
    \includegraphics[width=.45\columnwidth]{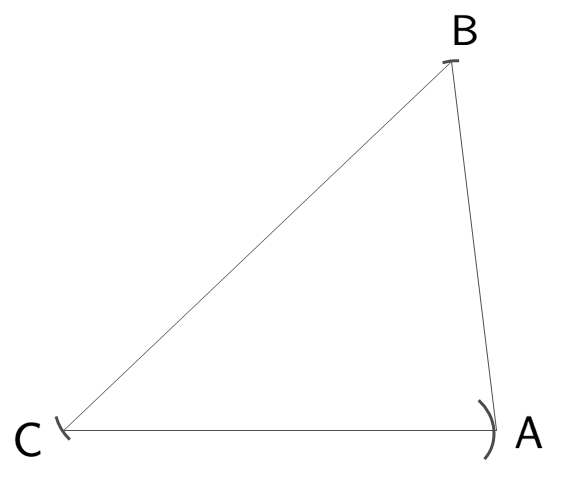} 
    \caption{Planar distribution with $\sfrac{4}{9}$ likelihood of obtuse triangle}
    \label{figtri}
\end{figure}
Assuming the angle at $\mathsf{B}$ is sufficiently close to $\pi/2$ and that $\delta$ is sufficiently small,
one can see that any triangle whose vertices fall at $\mathsf{ABC}$ will be acute, as well as all but a fraction $O(\ve)$ of triangles whose vertices fall at $\mathsf{BBA}$, $\mathsf{AAC}$, or $\mathsf{CCB}$.
This gives a probability of $\sfrac{5}{9} -O(\ve)$ of acute triangles.

The sharpness of the bound $\sfrac{1}{3}$ is open. In view of this, we ask
\vspace{2mm}

   \begin{tcolorbox}
\noindent  \textbf{Open Problem:} \textit{Find the minimal probability $p_*$ of an obtuse triangle.}
\end{tcolorbox}

Theorem 1, together with the above construction, show that $p_*$ lies somewhere in the range $\sfrac{1}{3}\leq p_*\leq \sfrac{4}{9}$ for planar triangles.
\vspace{2mm}

We now discuss a further generalization, where the triangle is generated by independent draws from a probability distribution on $d$--dimensional Euclidean space $\mathbb{R}^d$.  In the case of three-dimensions, we have
\vspace{2mm}

   \begin{tcolorbox}
\begin{theorem}\label{thm2}
The probability of an obtuse triangle on $\mathbb{R}^3$ is at least $\sfrac{1}{11}$. 
\end{theorem}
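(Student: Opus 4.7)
The plan is to mirror the proof of Theorem~\ref{thm1}. Let $t_n^{(3)}$ denote the minimum number of obtuse triangles in a configuration of $n$ points in general position in $\mathbb{R}^3$. The $n$-tuple averaging argument of Lemma~\ref{2dlem} carries over verbatim and gives the recurrence $t_{n+1}^{(3)} \geq \lceil t_n^{(3)} \cdot (n+1)/(n-2)\rceil$, and Theorem~\ref{thm2} will follow from $\lim_{n\to\infty} t_n^{(3)}/\binom{n}{3}$.

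The central obstacle is the base case. In 2D the quadrilateral argument gives $t_4 = 1$ immediately, but this is vacuous in 3D: the regular tetrahedron shows $t_4^{(3)} = 0$, and a near-regular triangular bipyramid with axial height $h \in (1/\sqrt{2}, 1)$ furnishes five points with all ten triangles acute, so $t_5^{(3)} = 0$. The appropriate base case is $t_6^{(3)} \geq 1$: any six points in general position in $\mathbb{R}^3$ determine at least one obtuse triangle. This is the $d=3$ case of the Danzer--Gr\"unbaum bound $2d-1$ on the size of an acute set. A proof proceeds by applying Radon's theorem to five-element subsets. If some subset admits a $1+4$ partition, one point $P$ lies interior to a tetrahedron of the other four, and then the four unit vectors from $P$ to those vertices have the origin in their convex hull, which forces an angular pair exceeding $\pi/2$ and hence an obtuse triangle at $P$. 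Otherwise all six points are in convex position, and the convex hull is (generically) a simplicial 3-polytope with $V = 6$, $E = 12$, $F = 8$; a perturbation analysis around the regular octahedron shows that, among the twenty face and diagonal triangles, the "acuteness" constraints at each pair of non-adjacent vertices form a cyclically incompatible system, so at least one triangle must be obtuse.

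With $t_6^{(3)} = 1$, iterating the recurrence yields $t_n^{(3)} = 1, 2, 4, 6, 9, 13, 18, 24, 31, 39, \ldots$ for $n = 6, 7, \ldots, 15$. Induction gives $t_n^{(3)} \leq \lfloor \binom{n}{3}/11 \rfloor$, so the ratio is bounded above by $1/11$. Convergence to $1/11$ follows by tracking the deficit $g_n := \binom{n}{3}/11 - t_n^{(3)}$: the recurrence yields $g_{n+1} \leq g_n(n+1)/(n-2)$ with strict inequality whenever the ceiling rounds up, and these rounding corrections accumulate to keep $g_n$ growing only linearly in $n$, so $g_n/\binom{n}{3} \to 0$ and $\lim t_n^{(3)}/\binom{n}{3} = 1/11$. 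The averaging argument from Theorem~\ref{thm1} then delivers the probability bound. The main difficulty is establishing the base case $t_6^{(3)} \geq 1$; the asymptotic tracking, while lacking the clean closed-form of the 2D case's floor-function identity, is routine by comparison.
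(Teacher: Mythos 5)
Your proposal follows the paper's own strategy exactly: the recurrence \eqref{recursion} applied in $\mathbb{R}^3$, the base case that six points force an obtuse triangle, and the limit of $t_n/\binom{n}{3}$. (Your computed values $1,2,4,6,9,13,\dots$ agree with the paper's closed form in Lemma~\ref{3dlem}.) However, the two places where you replace the paper's citation and computation with your own arguments both contain genuine gaps.

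First, the base case. The paper simply invokes Croft's theorem \cite{C61}; you try to prove it. Your Radon/Carath\'eodory step is correct and clean: if one of the six points lies in the convex hull of four others, the four unit vectors from that point positively span the origin, so some pair has negative inner product and an obtuse angle appears there. But the convex-position case---which is the entire difficulty, and the reason this statement is a published theorem---is waved away. A ``perturbation analysis around the regular octahedron'' can at best treat configurations near that one configuration; a general six-point convex configuration need not be close to it, and indeed need not even be combinatorially an octahedron (simplicial $3$-polytopes on six vertices come in two combinatorial types, the other having two vertices of degree $3$). Your attribution is also off: the ``$2d-1$ bound'' of Danzer--Gr\"unbaum was a \emph{conjecture}, false in high dimensions (the paper itself notes acute sets of size $2^{d-1}-1$ exist); in $d=3$ the fact you need is exactly Croft's theorem. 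Finally, note the regular octahedron has only acute and right triangles, so the correct statement is that six points force a \emph{non-acute} triangle; passing from right to strictly obtuse requires the genericity/measure-zero caveat (a subtlety the paper shares).

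Second, the convergence to $\sfrac{1}{11}$, which you call ``routine by comparison''---it is not; it is where the paper does its real work. The only rigorous ingredient you supply, $g_{n+1}\le g_n\cdot\frac{n+1}{n-2}$, is too weak to conclude anything: iterating it from $g_6=\frac{20}{11}-1=\frac{9}{11}$ telescopes to $g_n\le \frac{9}{11}\cdot\frac{\binom{n}{3}}{20}$, hence only $t_n/\binom{n}{3}\ge \frac{1}{11}-\frac{9}{220}=\frac{1}{20}$, i.e.\ exactly the trivial six-point bound you started from. The entire improvement from $\sfrac{1}{20}$ to $\sfrac{1}{11}$ resides in your unproven assertion that the ceiling's rounding corrections keep $g_n$ linear in $n$. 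Proving that assertion is precisely the content of the paper's Lemma~\ref{3dlem}: one guesses the closed form $t_n=\bigl(\binom{n}{3}-2n+k\bigr)/11$ with $k$ determined by $n \bmod 11$, and verifies it by an induction through the eleven residue classes, just as Lemma~\ref{2dlem} did with the mod-$3$ bookkeeping in the plane. There is no soft argument here: whether rounding compensates the multiplicative inflation of the deficit depends on the residues of $t_n$ modulo $11$, so the congruence analysis (or an equivalent) is unavoidable, and your proposal does not contain it.
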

\end{tcolorbox}

\begin{proof}[Proof of Theorem \ref{thm2}]
The crucial fact is that, for any 6 points in $\mathbb{R}^3$, there will be at least one triple of points that form an obtuse triangle (see e.g. \cite{C61}). Therefore, the extension of the n\"{a}ive  "four point argument" in dimension two to three dimensions would yield $5\%$, or $\sfrac{1}{20}$, since out of every $\binom{6}{3}=20$ triangles, at least one will be obtuse. However, we can raise this bound by applying the same recursive  logic as in two dimensions. Importantly, the key recursion relation \eqref{recursion} from Lemma 1 applies in arbitrary dimensions. Now, we claim that 
\begin{lemma}\label{3dlem}
Let $t_n$ be the minimum number of obtuse triangles in a configuration of $n$ points in $\mathbb{R}^3$. Then, $t_n =  \frac{\binom{n}{3}-2n+k}{11}$ where $k$ depends on the remainder of $n$ when divided by $11$:
\begin{center}
\begin{tabular}{c|c|c|c|c|c|c|c|c|c|c|c}
 $n$\ {\rm mod} {11} & 0 & 1 & 2 & 3 & 4 & 5 & 6 & 7 & 8 & 9 & 10 \\ \hline
 $k$ & 0 & 2 & 4 & 5 & 4 & 0 & 3 & 1 & 4 & 0 & -1
\end{tabular}.
\end{center} 
\end{lemma}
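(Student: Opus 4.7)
The plan is to mirror the proof of Lemma \ref{2dlem}, updating only the base case and the modular arithmetic. The recurrence
\[
t_{n+1} \;\geq\; \bigl\lceil t_n \cdot \tfrac{n+1}{n-2} \bigr\rceil
\]
is derived exactly as in Lemma \ref{2dlem} and is dimension-agnostic: each of the $n+1$ subsets of size $n$ contains at least $t_n$ obtuse triangles, each triangle lies in exactly $n-2$ of these subsets, and $t_n$ is integer-valued. The base case $t_6 \geq 1$ is supplied by the cited fact that every six points in $\mathbb{R}^3$ contain an obtuse triple. Induction then proceeds from $n = 6$ upward.

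Next, one checks that the candidate value $\tfrac{\binom{n}{3} - 2n + k_{n \bmod 11}}{11}$ is an integer, by casework on $n \bmod 11$. Since $\gcd(6, 11) = 1$, the residue $\binom{n}{3} \bmod 11$ depends only on $n \bmod 11$, and the eleven tabulated values of $k$ are exactly those forced by the requirement $\binom{n}{3} - 2n + k \equiv 0 \pmod{11}$.

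The inductive step, after using the identity $\binom{n+1}{3} = \binom{n}{3} \cdot \tfrac{n+1}{n-2}$ to rearrange, reduces to the double inequality
\[
2(n+1) - k_{n+1} \;\leq\; \frac{(2n - k_n)(n+1)}{n-2} \;<\; 2(n+1) - k_{n+1} + 11,
\]
which is the direct analogue of the $\lfloor n/3 \rfloor$-inequality in the two-dimensional case. It must be verified for each of the eleven residue classes $n \bmod 11$; writing $n = 11q + r$ with $r \in \{0,1,\ldots,10\}$ converts both bounds into elementary rational inequalities in the integer variable $q$, which hold for every $q$ with $n \geq 6$.

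The main obstacle is simply the bookkeeping: eleven residue classes, each with its own pair $(k_n, k_{n+1})$ and its own residues of $n-2$, $n$, and $n+1$ modulo $11$. No individual case is conceptually difficult, but care is needed not to miscompute the tabulated $k$-values, and a few small-$n$ boundary situations must be handled directly against values computed straight from the recurrence ($t_7 = 2,\ t_8 = 4,\ t_9 = 6,\ t_{10} = 9,\ t_{11} = 13$, etc.) to confirm that the inequalities are not violated at the bottom of the induction.
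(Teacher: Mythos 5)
Your proposal is correct and follows essentially the same route as the paper's own proof: the dimension-agnostic recurrence $t_{n+1} = \lceil t_n\cdot\tfrac{n+1}{n-2}\rceil$ seeded by Croft's $t_6=1$, integrality of the formula via mod-$11$ arithmetic, and an inductive step that reduces (after the identity $\binom{n}{3}\cdot\tfrac{n+1}{n-2}=\binom{n+1}{3}$) to exactly the paper's double inequality $2(n+1)-k_{n+1} \le \tfrac{(2n-k_n)(n+1)}{n-2} < 2(n+1)-k_{n+1}+11$, checked over the eleven residue classes. Your added explicitness (noting that divisibility pins down $k$ modulo $11$, with the particular representatives and low-$n$ values $t_7=2,\,t_8=4,\,t_9=6,\,t_{10}=9,\,t_{11}=13$ confirmed by the recurrence) matches and slightly sharpens the paper's sketch, which works out only the $n\equiv 6 \pmod{11}$ case and declares the rest analogous.
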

\begin{proof}[Proof of Lemma \ref{3dlem}]
Unfortunately, the proof of the Lemma is quite tedious, as the induction must proceed through eleven different cases. Here we sketch the beginning of this proof; the entire process is similar.

Let us take those cases for which $n \equiv 6 \pmod{11}$. The base case is simple: for every $6$ points, there must be $1$ obtuse triangle, and our formula returns $\frac{20-12+3}{11} = 1$. Let us then assume that our formula is correct when $n \equiv 6 \pmod{11}$. Then, we must prove that $$\tfrac{\binom{n+1}{3}-2(n+1)-10}{11} < \tfrac{\binom{n}{3}-2n+3}{11} \cdot \tfrac{n+1}{n-2} \le \tfrac{\binom{n+1}{3}-2(n+1)+1}{11}.$$ (Note that we substitute the values of $k$ in based on the modulus we are working with, and that we change an equation with a ceiling function into an inequality.)

We can then multiply all sides of the inequality by $11$, and use that $\binom{n}{3} \cdot \frac{n+1}{n-2} = \binom{n+1}{3}$. Therefore, subtracting $\binom{n+1}{3}$ from all sides gives us $$-2n-12 < (-2n+3)\left(\tfrac{n+1}{n-2}\right) \le -2n-1.$$
The inequality is easily shown to be satisfied for all necessary $n$ (e.g. $\geq 6$.)

Therefore, if our formula (and values of $k$) is correct for $n \equiv 6 \pmod {11}$, then it is correct for $n \equiv 7 \pmod {11}$. As mentioned above, we could go through each modulus until we get back to $n \equiv 6 \pmod{11}$, at which point our inductive step will truly be complete and, together with our base case, we will have established our formula for the minimum number of obtuse triangles.
\end{proof}

We now complete the proof of Theorem 2. Dividing $t_n$ by $\binom{n}{3}$, the total number of triangles formed by $n$ points, limits to $\sfrac{1}{11}$ as $n$ goes to infinity, since the growth of $\binom{n}{3}$ far outpaces that of $-2n$. 
Thus, given any probability distribution in three dimensions, an obtuse triangle is no  less likely than $\sfrac{1}{11}$.
\end{proof}

Now for a construction with low probability of obtuse triangles. Consider a probability distribution in $\mathbb{R}^3$, supported
on a spherical cap (probability $p$) together with a small patch at the center of the sphere (probability $1-p$). On the cap, which can be as flat as we like, we choose the probability distribution to look like the triangle construction resulting in a $\sfrac{5}{9}$ acute likelihood in two dimensions.  Then, the acute-probability for this distribution is $3(1-p)p^2$
(two points on the cap and one in the patch) plus $(1-p)^3 x$ 
(three points in the patch, where $x$ = acute-probability
of the patch) plus $\frac{5}{9}p^3$ (three points on the cap).  Now the patch consists of a
tiny little spherical cap, structured in the same way, together with a tiny little
patch at its center, so $x$ is given by $3(1-p)p^2 + (1-p)^3 x'+ \frac{5}{9}p^3,$
where $x'$ is the acute-probability for that tiny little
patch.  Continue in this way -- replace each patch in
turn by a spherical cap with a smaller patch at its center. See Figure \ref{figss} for a depiction of the resulting self-similar distribution.
We are led to solving for $x$ in
$$
    x = 3(1-p)p^2 + (1-p)^3 x+ \tfrac{5}{9}p^3.
$$
The choice of $p=\sfrac{(22 - \sqrt{133})}{13} $ maximizes the likelihood of acute triangles, which is $x=\sfrac{(2 \sqrt{133}-17)}{9}\approx 0.673903$.  Thus, obtuse triangles occur with probability about $0.3261$.
\begin{figure}[htb]\centering
    \includegraphics[width=.8\columnwidth]{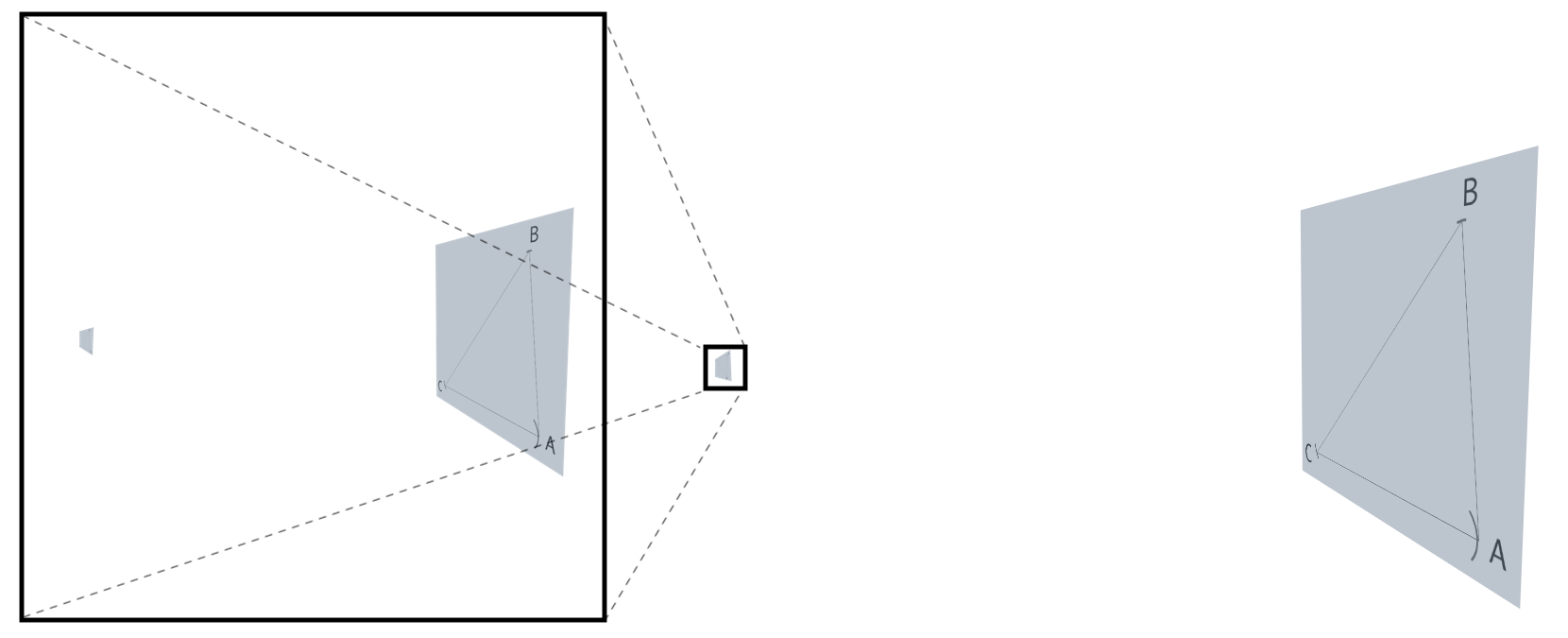} 
    \caption{Self-similar distribution in 3D with low probability of obtuse.}
    \label{figss}
\end{figure}


The above shows that, for triangles in $\mathbb{R}^3$, $\sfrac{1}{11}\leq p_*\leq \sfrac{(26-2\sqrt{133})}{9} \approx  0.3261$.  
\\

Of further interest are bounds for the minimum probability of obtuse triangles for distributions in $\mathbb{R}^d$ as $d \rightarrow \infty$. We begin by extending the recursive argument above to these higher dimensions.

Most importantly, perhaps, is the fact that there is no known sharp bound for the minimum number of points needed to guarantee at least one obtuse triangle  above three dimensions \cite{KZ20}. This can be somewhat circumvented by using the simplest bound, which is $2^d$ points for $d$ dimensions (the maximum size of an acute set in $\mathbb{R}^d$ is known to be between $2^{d-1}-1$ and $2^d-1$). See discussion in \cite{GH19, Z19, KZ20}. 
Therefore,  we can assert that the probability of a random triangle in $\mathbb{R}^d$ being obtuse must be greater than or equal to $\sfrac{1}{\binom{2^d}{3}}$.  However, this bound is clearly not sharp, as we can easily use similar recursion ideas to improve it. Using a computer program for the recursion, we can find the following approximate bounds for several higher dimensions:

\begin{center}
\begin{tabular}{c|c}
Number of Dimensions & (Approximate) Extrapolated Lower Bound \\ \hline
4 & 7.91$\times 10^{-3}$ \\\hline
5 & 1.73$\times 10^{-3}$ \\\hline
6 & 4.07$\times 10^{-4}$ \\\hline
7 & 9.89$\times 10^{-5}$ \\\hline
8 & 2.43$\times 10^{-5}$ \\
\end{tabular}
\end{center} 
Indeed, in dimension 8, compared to the estimate $\sfrac{1}{\binom{2^8}{3}}\approx 3.6\times 10^{-7}$,  the probability reported above is $67$ times larger. Furthermore, these bounds could easily be improved if the upper bound for an acute set is improved, as the program only uses the minimum number of points to guarantee an obtuse triangle.

However, for large numbers of points $n$, the recurrence relation \eqref{recursion} essentially amounts to adding $1$, since multiplying by $\frac{n+1}{n-2}$ only slightly increases $t_n$. In particular, taking the ceiling of the product adds $1$, so the total value of the probability (represented by $\sfrac{t_n}{\binom{n}{3}}$) increases by $\sfrac{1}{\binom{n}{3}}$. Therefore, our recursion, whose closed form was not obvious in low dimensions, tends towards a relatively simple summation for higher dimensions. We have
   \begin{tcolorbox}
\begin{theorem}\label{thm3}
The probability of an obtuse triangle on $\mathbb{R}^d$ is asymptotically at least $\sfrac{3}{2^{2d}}$  as $d \rightarrow \infty$.
\end{theorem}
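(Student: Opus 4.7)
The plan is to iterate the dimension-independent recursion of Lemma~\ref{2dlem} from a base case supplied by the acute-set bound. Since the maximum size of an acute set in $\mathbb{R}^d$ is at most $2^d-1$, any $n_0:=2^d$ points in $\mathbb{R}^d$ must contain at least one obtuse triangle, so $t_{n_0}\geq 1$. The target probability is bounded below by $\lim_{n\to\infty}t_n/\binom{n}{3}$, and I would estimate this limit by passing to a continuous approximation of the recursion in the limit of many points.

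First I would separate the iteration into two phases. Writing $(n+1)/(n-2)=1+3/(n-2)$, as long as $3t_n/(n-2)<1$ the multiplicative factor shifts $t_n$ up by strictly less than one, and the ceiling forces $t_{n+1}=t_n+1$; starting from $t_{n_0}=1$ this yields $t_{n_0+k}=k+1$ up to $k\approx(n_0-5)/2$, when this additive phase ends with $n\approx 3n_0/2$ and $t_n\approx n_0/2$. Beyond this transition the multiplicative factor dominates, and I would model the full iteration by the ODE $\rmd t/\rmd n=3t/n+1$ with $t(n_0)=1$, the ``$+1$'' encoding one unit of ceiling mass per step. Solving by variation of parameters gives $t(n)=Cn^3-n/2$ with $C=(1+n_0/2)/n_0^3\sim 1/(2n_0^2)$ for large $n_0$, hence
\be
\frac{t(n)}{\binom{n}{3}}\longrightarrow 6C\sim\frac{3}{n_0^2}=\frac{3}{2^{2d}}\quad\text{as }n\to\infty,
\ee
which is the claimed asymptotic in $d$.

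The main obstacle is making the ``$+1$ per step'' input to the ODE rigorous. Setting $\delta_k:=\lceil t_k(k+1)/(k-2)\rceil-t_k(k+1)/(k-2)\in[0,1)$ and telescoping the recursion (using $\binom{k+1}{3}=(k+1)/(k-2)\cdot\binom{k}{3}$) gives the exact identity
\be
\frac{t_n}{\binom{n}{3}}=\frac{1}{\binom{n_0}{3}}+\sum_{k=n_0}^{n-1}\frac{\delta_k}{\binom{k+1}{3}},
\ee
so the asymptotic constant is controlled by the long-run behaviour of $\delta_k$. In the additive phase $\delta_k=1-3t_k/(k-2)$ stays close to $1$, but in the multiplicative phase $\delta_k=1-\{3t_k/(k-2)\}$, and extracting an effective lower bound on its average over large ranges of $k$ requires an arithmetic/equidistribution input on the fractional parts $\{3t_k/(k-2)\}$---this is the step I expect to be hardest, and where the authors' remark that the recursion ``essentially amounts to adding $1$'' would need to be made precise. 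Short of that, the same telescoping cleanly delivers a bound of the form $c\cdot 2^{-2d}$ for some explicit positive constant $c$, which at least establishes the $2^{-2d}$ scaling claimed in Theorem~\ref{thm3}.
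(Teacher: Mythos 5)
Your proposal is, in essence, the paper's own proof: seed the recursion \eqref{recursion} at $n_0=2^d$ via the acute-set bound, assume the ceiling contributes one full unit at every step, and sum the increments. Your ODE $\rmd t/\rmd n=3t/n+1$ is exactly the continuum form of the paper's summation $\sum_{k\ge 2^d}1/\binom{k}{3}=3/\bigl((2^d-1)(2^d-2)\bigr)$, and your telescoping identity is the correct exact statement of what both arguments need: that the defects $\delta_k$ average to $1$ over the range $k\sim 2^d$ that dominates the sum. You flag this as the hardest step; the paper simply asserts it (``the probability asymptotically increases by $1/\binom{n}{3}$'') without proof. So far, then, you have faithfully reproduced the published argument together with its weak point.

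The real problem is that the missing step is not merely hard: it is false, so the constant $3$ is not attainable by this route. The iteration has a deterministic phase structure: while $3t_k/(k-2)\in[j,j+1)$ one has $t_{k+1}=t_k+j+1$ and $\delta_k=(j+1)-3t_k/(k-2)$, and within each phase this quantity decays essentially linearly from $1$ to $0$ --- a sawtooth whose mean is about $\sfrac{1}{2}$, not a sequence hugging $1$. (Your remark that $\delta_k$ ``stays close to $1$'' in the additive phase is the first instance: it starts near $1$ but reaches $0$ by the phase's end at $k\approx 3n_0/2$.) In the scaling limit $n_0\to\infty$, phase $j$ contributes $\frac{6(j+4/3)}{(2j+3)^2}\bigl[\frac{(2j)!!}{(2j+1)!!}\bigr]^2\,n_0^{-2}$ to your telescoped sum, and the total is $\approx 1.57\,n_0^{-2}$ --- roughly half of $3\cdot 2^{-2d}$. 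The paper's own results display this deficit: in 2D the recursion's limit is $\sfrac{1}{3}$ (Lemma \ref{2dlem}) while $\sum_{k\ge4}1/\binom{k}{3}=\sfrac{1}{2}$; in 3D it is $\sfrac{1}{11}$ (Lemma \ref{3dlem}) while $\sum_{k\ge6}1/\binom{k}{3}=\sfrac{3}{20}$; and the table for $d=4,\dots,8$ runs at $0.55$ down to $0.52$ times $3/\bigl((2^d-1)(2^d-2)\bigr)$, consistent with the ratio $1.57/3\approx 0.52$ and visibly not tending to $1$. What is rigorously available is precisely your fallback: on the additive phase, $\delta_k=1-3(k-n_0+1)/(k-2)$ is explicit, and summing just those terms gives $\lim_n t_n/\binom{n}{3}\ge(\sfrac{8}{9}-o(1))\,2^{-2d}$. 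That establishes the $2^{-2d}$ scaling with an explicit constant --- which is also all the paper's argument really establishes; neither your ODE heuristic nor the published proof yields Theorem \ref{thm3} with the constant $3$, though the statement itself could of course still be true for other reasons.
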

\end{tcolorbox}
\begin{proof}[Proof of Theorem \ref{thm3}]
We seek to begin this recursion with the minimum number of points that guarantee at least one obtuse triangle, just as we have done for $2$ and $3$ dimensions. For $d >  3$, we simply use the bound $2^d$ points for $d$ dimensions (see discussion above).  Together with the observation that the probability asymptotically increases by $\sfrac{1}{\binom{n}{3}}$, as $d \rightarrow \infty$, the recurrence relation will have a limit closer and closer to $$\sum_{k=2^d}^{\infty} \frac{1}{\binom{k}{3}} = \sum_{k=2^d}^{\infty} \frac{6}{k(k-1)(k-2)} = \frac{3}{2^{2d}-3\cdot2^d+2} \asymp  \frac{3}{2^{2d}}.$$ 
\end{proof}


Given our lower bound for large dimensions, we also seek a construction in high dimensions that would yield a low probability of obtuse triangles. It turns out that, for $d \ge 6$, a uniform distribution on the $(d-1)$--sphere $\mathbb{S}_{d-1}$ yields a probability that is not beaten by any previously-examined distribution, including higher-dimensional analogues of the constructions above or the distributions discussed in \cite{BS96}. (In dimensions $2$, $3$, $4$ and $5$, the self-symmetrical distribution does better than the sphere.)

We now sketch how to compute the probability of a triangle drawn uniformly at random on the surface of the sphere to be obtuse. To start, we can rotate the unit $(d-1)$--sphere to force two of the three points onto the $xy$-plane. Specifically, we rotate the sphere to get one of the points to $(1,0,\dots,0)$, and then rotate about the $x$-axis to land the second point onto the $xy$-plane. Given those points, the triangle is obtuse if and only if the third point is in one of the three spherical caps pictured in Figure \ref{sphpic}.

\begin{figure}
\begin{center}
    \includegraphics[width=.6\columnwidth]{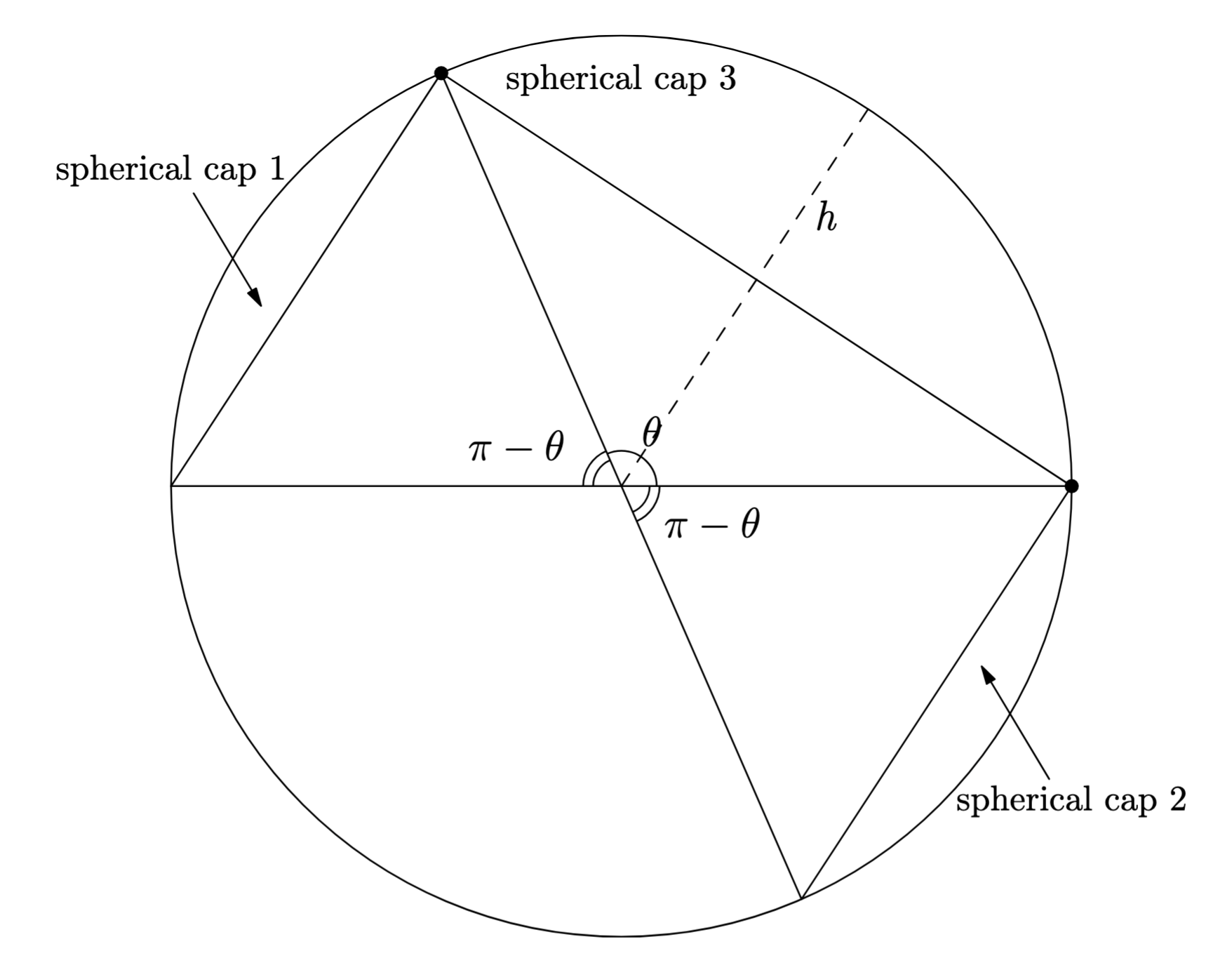} 
    \end{center}
\caption{Cross section of sphere (on $xy$-plane)}
\label{sphpic}
\end{figure}

The area of a hyperspherical cap may be computed in terms of hypergeometric functions (in particular, the regularized incomplete beta function  $I_z(a,b)$). (See \cite{L11}.) Using this, one finds that the probability that a random point is on one of the three spherical caps is 
\be\label{prob}\tfrac{1}{2}I_{\sin^2(\frac{\theta}{2})}\left(\tfrac{d-1}{2},\tfrac{1}{2}\right) + I_{\cos^2(\frac{\theta}{2})}\left(\tfrac{d-1}{2},\tfrac{1}{2}\right). \ee

We must integrate this in the angle $\theta$ from $0$ to $\pi$ to find the total probability of an obtuse triangle, but before we do so, we must adjust our distribution. Because the distribution over the entire sphere is uniform, the distribution of the angle between the two points on the $xy$-plane will not be. When we rotated the sphere about the $x$-axis, we mapped all the points on an $(d-2)$--sphere with radius $\sin\theta$ to a single point. The area of this sphere will be directly proportional to $\sin^{d-2}(\theta)$, so adjusting our distribution gives us that the probability of a uniformly random triangle on $\mathbb{S}_{d-1}$ is obtuse is $$\int_0^\pi \left(\tfrac{1}{2}I_{\sin^2(\frac{\theta}{2})}\left(\tfrac{d-1}{2},\tfrac{1}{2}\right) + I_{\cos^2(\frac{\theta}{2})}\left(\tfrac{d-1}{2},\tfrac{1}{2}\right)\right)\frac{\sin^{d-2}(\theta)\mathrm{d}\theta}{\int_0^{\pi}\sin^{d-2}(\theta)\mathrm{d}\theta}.$$
We remark that in three dimensions (for $\mathbb{S}_2$), this probability is $\sfrac{1}{2}$, making acute and obtuse triangles equally likely (yet another justification about the perfect nature of the 2-sphere).

This precise expression, along with the recursion discussed above, allows us to calculate precise bounds for the minimum probability of a random-spherical  triangle being obtuse in any dimension.  However, we are primarily interested in high dimensions, at which point the measure $\frac{\sin^{d-2}(\theta)\mathrm{d}\theta}{\int_0^{\pi}\sin^{d-2}(\theta)\mathrm{d}\theta}$  approaches $\delta(\theta-\frac{\pi}{2})\mathrm{d}\theta$, a Dirac delta distribution centered at $\frac{\pi}{2}$. Therefore, instead of evaluating a large integral, we can simply let $\theta = \frac{\pi}{2}$ in \eqref{prob} to get an asymptotic approximation of $\frac{3}{2}I_{\frac{1}{2}}\left(\frac{d-1}{2}, \frac{1}{2}\right).$ Thus, combining with Theorem 3, we conclude that the minimal probability of an obtuse triangle on $\mathbb{R}^d$ asymptotically lies in
$$
3\cdot 2^{-2d}< p_*(d) <\tfrac{3}{2}I_{\frac{1}{2}}\left(\tfrac{d-1}{2}, \tfrac{1}{2}\right) \qquad \text{as $d\to\infty$}.
$$
\\

\vspace{-4mm}
\noindent \textbf{Acknowledgements.}  We thank Bob Geroch for many interesting and fun discussions on triangles and their complement. We also thank Daniil Glukhovskiy and Sam Zbarsky for interesting discussions. The work of TDD was partially supported by the NSF CAREER award \#2235395, a Stony Brook University Trustee’s award as well as an Alfred P. Sloan Fellowship.

\qquad\\
Theodore D. Drivas\\
Department of Mathematics\\
Stony Brook University, Stony Brook NY 11790\\
{\tt tdrivas@math.stonybrook.edu}

\qquad\\
Michael  Retakh\\
Ward Melville High School\\
{\tt michael.retakh@gmail.com}

}
\end{document}